\newtheorem{lemma}{Lemma}
\newtheorem{definition}{Definition}
\newtheorem{theorem}[lemma]{Theorem}
\newtheorem{corollary}[lemma]{Corollary}
\def\FPRAS{\mathrm{FPRAS}}
\def\FPAUS{\mathrm{FPAUS}}
\def\NP{\mathrm{NP}}
\def\FP{\mathrm{FP}}
\def\sharpP{\mathrm{\#P}}
\def\sharpPcomp{\mathrm{\#P-complete}}
\def\FPAUS{\mathrm{FPAUS}}
\def\sharp3SAT{\mathrm{\#3SAT}}
\begin{document}
\title{Packing tree degree sequences
}
%
%
%
%
%

\numberofauthors{4} 
%
\author{
%
%
\alignauthor
Krist\'of B\'erczi\thanks{Research is supported by a grant (no.\ K
    109240) from the National Development Agency
    of Hungary, based on a source from the Research and Technology Innovation
    Fund.
}\\
       \affaddr{Department of Operations Research, 
E\"otv\"os Lor\'and University}\\
       \affaddr{P\'azm\'any P\'eter s\'et\'any 1/c}\\
       \affaddr{1117 Budapest, Hungary}\\
       \email{berkri@cs.elte.hu}
\alignauthor
Zolt\'an Kir\'aly\thanks{Research is supported by a grant (no.\ K
    109240) from the National Development Agency
    of Hungary, based on a source from the Research and Technology Innovation
    Fund.
}\\
\affaddr{Department of Computer Science and
  Egerv\'ary Research Group (MTA-ELTE),
E\"otv\"os Lor\'and University}\\
       \affaddr{P\'azm\'any P\'eter s\'et\'any 1/c}\\
       \affaddr{1117 Budapest, Hungary}\\
       \email{kiraly@cs.elte.hu}
\and
\alignauthor
Changshuo Liu\\
       \affaddr{Budapest Semesters in Mathematics}\\
       \affaddr{Bethlen G\'abor t\'er 2}\\
       \affaddr{1071 Budapest, Hungary}\\
       \email{cl20@princeton.edu}
\alignauthor
Istv\'an Mikl\'os\thanks{Secondary affiliation: SZTAKI, 1111 Budapest, L\'agym\'anyosi u. 11, Hungary}\\
       \affaddr{R\'enyi Institute}\\
       \affaddr{Re\'altanoda u. 13-15}\\
       \affaddr{1053 Budapest, Hungary}\\
       \email{miklos.istvan@renyi.mta.hu}
}
\maketitle

\begin{abstract}
A degree sequence $D=d_1,d_2,\dots, d_n$ is a series on non-negative integers. A degree sequence is graphical if there exists a vertex labeled graph $G$ in which the degree of vertex $v_i$ is exactly $d_i$ for $i=1,\dots,n$. The graph $G$ is called a realization of $D$. The color degree matrix problem, also known as edge disjoint realization, edge packing or graph factorization problem, is the following: given a $c \times n$ degree matrix $D=\{\{d_{1,1},d_{1,2},\dots, d_{1,n} \},\{d_{2,1},d_{2,2},\dots,d_{2,n} \},\dots \{d_{c,1},d_{c,2},\allowbreak \dots, \break d_{c,n}\}\}$, in which each row of the matrix is a graphical degree sequence, decide if there exists pairwise edge-disjoint realizations of the degree sequences. Such set of edge disjoint graphs is called a realization of the degree matrix. A realization can also be presented as an edge colored simple graph, in which the edges with a given color form a realization of the degree sequence in a given row of the color degree matrix.

It is known that the color degree matrix problem is $\NP$-complete even if the number of colors is three and the degrees on each vertex sum up to $n-1$, that is, when a decomposition of the complete graph is required into subgraphs with prescribed degrees; and it is also $\NP$-complete when the number of colors is two and the sum of the degrees on some of the vertices is less than $n-1$. However, special cases that are computationally tractable are also of interest. A classical result of Kundu \cite{kundutree} shows that deciding if two tree degree sequences have edge disjoint realizations is in P.

Motivated by the aforementioned result, we consider special cases of the two tree degree sequences problem. We show that if two tree degree sequences do not have common leaves then they always have edge-disjoint caterpillar realizations. By using a probabilistic method, we prove that two tree degree sequences always have edge-disjoint realizations if each vertex is a leaf in at least one of the trees. This theorem can be extended to more trees: we show that the edge packing problem is in P for an arbitrary number of tree sequences with the property that each vertex is a non-leaf in at most one of the trees. 

We also consider the following variant of the degree matrix problem: given two degree sequences $D_1$ and $D_2$ such that $D_2$ is a tree degree sequence, decide if there exists edge-disjoint realizations of $D_1$ and $D_2$ where the realization of $D_2$ is not necessarily a tree. We show that this problem is already $\NP$-complete.

Counting, or just estimating the number of distinct realizations of degree sequences is challenging in general. We show that efficient approximations for the number of solutions as well as an almost uniform sampler exist for two tree degree sequences if each vertex is a leaf in at least one of the trees.
\end{abstract}

\section{Introduction}

Packing degree sequences is related to discrete tomography. The central problem of tomography is to reconstruct spatial objects from lower dimensional projections. The discrete 2D version is to reconstruct a colored grid from vertical and horizontal projections. In the simplest version, this problem is to reconstruct the coloring of an $n \times m$ grid with the requirement that each row and column has a specific number of entries for each color. Such colored matrix can be considered as a factorization of the complete bipartite graph $K_{n,m}$. Indeed, for each color $c_i$, the 0-1 matrix obtained by replacing $c_i$ to 1 and all other colors to 0 is an adjacency matrix of a simple bipartite graph such that the disjoint union of these simple graphs is $K_{n,m}$. The prescribed number of entries for each color are the degrees of the simple bipartite graphs. Therefore, an equivalent problem is to give a factorization of the complete bipartite graph into subgraphs with prescribed degree sequences.

It is also possible to consider the non-bipartite version of the graph factorization problem. Obviously, the sum of the degrees for each vertex must be $n-1$ when the complete graph $K_n$ is factorized. Therefore, if there are $k$ degree sequences, the last degree sequence is uniquely determined by the first $k-1$ degree sequences. When $k=2$, the problem is reduced to the degree sequence problem, and can be solved in polynomial time \cite{H65,H55}. When $k=3$, the problem already becomes $\NP$-complete \cite{dgm2009}. However, special cases are polynomially solvable. Such a special case is when one of the degree sequences is almost regular, that is, any two degrees differ at most by 1 \cite{kundu}.

In this paper we consider the case when $k=3$ and two of the degree sequences are tree degree sequences. It was already known that this case is tractable \cite{kundutree}. Here we present a new result considering special, caterpillar realizations. Another alternative proof is given for a special subclass of pairs of tree degree sequences that can be extended to an arbitrary number of sequences. The size of the solution space and sampling from it is also discussed. As a negative result, we show that deciding the existence of edge-disjoint realizations for two degree sequences $D_1$ and $D_2$ is $\NP$-complete even if $D_2$ is a tree degree sequence (but its realization do not have to be a tree). 

\section{Preliminaries}\label{sec:pre}

In this section we give the definitions and lemmas needed to state the theorems. The central problem in this paper is the color degree sequence problem.

\begin{definition}
A \textbf{degree sequence} $D=d_1,d_2,\dots, d_n$ is a series of non-negative integers. A degree sequence is \textbf{graphical} if there is a vertex labeled simple graph $G$ in which the degrees of the vertices are exactly $D$. Such graph $G$ is called a \textbf{realization} of $D$. The color degree matrix problem is the following: given a $c \times n$ degree matrix $D=\{\{d_{1,1},d_{1,2},\dots, d_{1,n} \},\{d_{2,1},d_{2,2},\dots,d_{2,n} \},\dots, \{d_{c,1},d_{c,2},\dots,\break d_{c,n}\}\}$, in which each row of the matrix is a degree sequence, decide if there is an ensemble of edge disjoint realizations of the degree sequences. Such a set of edge disjoint graphs is called a \textbf{realization} of the degree matrix. Given two degree sequences $D=d_1,d_2,\dots,d_n$ and $F=f_1,f_2,\dots,f_n$, their \textbf{sum} is defined as $D+F= d_1+f_1,d_2+f_2, \dots, d_n +f_n$.
\end{definition}

For sake of completeness, we define tree degree sequences, path sequences and caterpillars.

\begin{definition}
Let $D=d_1,d_2,\dots, d_n$ be a degree sequence. Then $D$ is called a \textbf{tree sequence} if $\sum_{i=1}^n d_i = 2n-2$ and each degree is positive. If all of the degrees are $2$ except two of them which are $1$, then $D$ is called a \textbf{path sequence}. A tree is a \textbf{caterpillar} if its non-leaf vertices span a path.
\end{definition}

We will use the following complexity classes later on.

\begin{definition}
A decision problem is in $\NP$ if a non-deterministic Turing Machine
can solve it in polynomial time. An equivalent definition is that a
witness proving the ``yes'' answer to the question can be verified in
polynomial time. A counting problem is in $\sharpP$ if it asks for the
number of witnesses of a problem in $\NP$. A counting problem in $\sharpP$ is in $\FP$ if there is a polynomial running time algorithm
which gives the solution. It is $\sharpPcomp$ if any problem in $\sharpP$ 
can be reduced to it by a polynomial-time counting reduction.
\end{definition}

\begin{definition}
A counting problem in $\sharpP$ is in $\FPRAS$ (\textbf{Fully Polynomial
Randomized Approximation Scheme}) if there exists a randomized
algorithm such that for any problem instance $x$, and $\epsilon, \delta > 0$,
it generates an approximation $\hat{f}$ for the solution $f$, satisfying
\begin{equation*}
P\left(\frac{f}{1+\epsilon} \leq \hat{f} \leq f(1+\epsilon)\right) \geq 1 - \delta, 
\end{equation*}
and the algorithm has a time complexity bounded by a polynomial of
$|x|$, $1/\epsilon$ and $-\log(\delta)$.
\end{definition}

The total variational distance $d_{TV}(p,\pi)$ between two discrete
distributions $p$ and $\pi$ over the set $X$ is defined as
\begin{equation*}
d_{TV}(p,\pi) := \frac{1}{2}\sum_{x \in X} |p(x) - \pi(x)|
\end{equation*}

\begin{definition}
A counting problem in $\sharpP$ is in $\FPAUS$ (\textbf{Fully Polynomial Almost Uniform Sampler}) if there exists a randomized
algorithm such that for any instance $x$, and $\epsilon > 0$, it
generates a random element of the solution space following a distribution $p$ satisfying
\begin{equation*}
d_{TV}(p,U) \leq \epsilon, 
\end{equation*}
where $U$ is the uniform distribution over the solution space,
and the algorithm has a time complexity bounded by a polynomial of $|x|$ and
$-\log(\epsilon)$. 
\end{definition}

The following technical lemma will be used later for constructing edge-disjoint caterpillar realizations.

\begin{lemma}\label{hamil}
For $n \ge 4$, there exists two edge-disjoint Hamiltonian paths in the complete graph $K_n$ whose ends are pairwise different.
\end{lemma}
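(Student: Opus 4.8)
The plan is to prove the lemma by induction on $n$, constructing the two paths for $K_n$ from a solution on $K_{n-1}$. For the base case $n=4$ on the vertex set $\{1,2,3,4\}$ I would exhibit the pair explicitly: the path $1,2,3,4$ (edges $\{1,2\},\{2,3\},\{3,4\}$) and the path $3,1,4,2$ (edges $\{1,3\},\{1,4\},\{2,4\}$). These two Hamiltonian paths together use all six edges of $K_4$, so they are edge-disjoint, and their end sets are $\{1,4\}$ and $\{2,3\}$, which are pairwise different. This settles $n=4$ and anchors the induction.

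For the inductive step (for $n\ge 5$) I would assume that $K_{n-1}$ on $\{1,\dots,n-1\}$ already admits two edge-disjoint Hamiltonian paths $P_1'$ and $P_2'$ whose four ends $a_1,b_1,a_2,b_2$ are pairwise different, and then introduce the new vertex $n$. Since both paths must span all $n$ vertices, the naive idea of appending $n$ to an endpoint of each path fails, because $n$ would become a common end. Instead I would \emph{append} $n$ to $P_1'$ at the end $b_1$, producing $P_1$ with ends $a_1$ and $n$, while \emph{inserting} $n$ internally into $P_2'$: pick an edge $\{u,v\}$ of $P_2'$, delete it, and add $\{u,n\}$ and $\{n,v\}$, producing $P_2$ with unchanged ends $a_2$ and $b_2$. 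All old edges are disjoint by hypothesis, and the new edges $\{b_1,n\},\{u,n\},\{v,n\}$ all meet $n$, so disjointness of $P_1$ and $P_2$ reduces to requiring $b_1\notin\{u,v\}$, i.e.\ choosing an insertion edge of $P_2'$ not incident to $b_1$. Because $b_1$ is not an end of $P_2'$ (the four ends are distinct), $b_1$ is an internal vertex of $P_2'$ and meets exactly two of its $n-2$ edges, so such an edge exists precisely when $n-2\ge 3$, i.e.\ $n\ge 5$, matching the base case. Finally the four ends $a_1,n,a_2,b_2$ are pairwise different, since $a_1,a_2,b_2$ were distinct and $n$ is new.

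The main obstacle is exactly the endpoint bookkeeping in the inductive step: both Hamiltonian paths are forced to contain the fresh vertex $n$, so one cannot simply grow both at their tips without collapsing two of the four required distinct ends. The resolution is the asymmetric treatment—$n$ becomes a new end of one path but an interior vertex of the other—together with the observation that the insertion edge can be chosen to avoid $b_1$, which is where the bound $n\ge 4$ (and the separate base case) enters. Everything else, namely that the modified walks are genuine Hamiltonian paths and that edges not touching $n$ stay disjoint, is routine.
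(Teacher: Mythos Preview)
Your proof is correct and follows essentially the same inductive idea as the paper: append the new vertex $n$ to an end of one path and insert it into the other by subdividing an edge that avoids the append-point, with the counting $n-2\ge 3$ guaranteeing such an edge exists. The only cosmetic difference is that the paper fixes the first path once and for all as $1,2,\ldots,n$ (so the append is always at $n-1$ and the second path keeps the fixed ends $2$ and $3$), whereas you carry both paths abstractly through the induction.
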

\begin{proof}
  Let $V=\{1,2,\ldots,n\}$, and let the first Hamiltonian path be $1,2,3\ldots,n$. We are going to show by induction that there is a second  Hamiltonian path $H$ starting at $2$, ending at $3$ and using no edge between consecutive integers.
  For $n=4$ the path $H=2,4,1,3$ does the job. Suppose $n>4$ and we have a path $H'$ on vertices $1,\ldots,n-1$ between 2 and 3. Since it has at least three edges, there is an edge $ij$ where $i,j<n-1$. Replace this edge by two edges $in$ and $nj$ for getting the desired path $H$. 
\end{proof}

\section{Packing trees}

First we consider the problem of packing two tree degree sequences without common leaves. 

\begin{theorem}
\label{theo:caterpillar}
\begin{sloppypar}
Let $D = d_1, d_2, \dots, d_n$ and ${F = f_1, f_2, \ldots, f_n}$ be two tree degree sequences such that ${\min_{i}\{d_i + f_i\} \ge 3}$. Then $D$ and $F$ have edge disjoint caterpillar realizations.
\end{sloppypar}
\end{theorem}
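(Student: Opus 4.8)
The plan is to build the two caterpillars by an induction that reduces the general case to the case where both $D$ and $F$ are path sequences, which is exactly the situation covered by Lemma~\ref{hamil}. First I would record two easy structural facts. The hypothesis $\min_i\{d_i+f_i\}\ge 3$ is equivalent to saying that no vertex is a leaf in both trees, and summing it over all $i$ gives $4n-4=\sum_i(d_i+f_i)\ge 3n$, so $n\ge 4$ and Lemma~\ref{hamil} is always applicable. Second, a single tree degree sequence $T$ admits a caterpillar realization for \emph{any} prescribed left-to-right ordering of its internal vertices (those of degree at least $2$) along the spine: after laying down the $|I|-1$ spine edges, the number of free ``slots'' remaining on the spine equals exactly the number of leaves, so the leaves can always be attached so as to meet the prescribed degrees. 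In particular, when both $D$ and $F$ are path sequences their caterpillar realizations are Hamiltonian paths, and two edge-disjoint such paths with distinct ends are provided directly by Lemma~\ref{hamil}.

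Next I would set up the induction, ordering the instances by the total excess $\sum_i(\max\{d_i-2,0\}+\max\{f_i-2,0\})$, which vanishes precisely when both sequences are path sequences (the base case above). For the inductive step, suppose $D$ is not a path, so some internal vertex $w$ has $d_w\ge 3$; pick any leaf $u$ of $D$, so $d_u=1$ and, by the no-common-leaf hypothesis, $f_u\ge 2$. Form $D'$ from $D$ by setting $d'_w=d_w-1$ and $d'_u=2$ and leaving all other entries unchanged. Then $D'$ is again a tree sequence, the pair $(D',F)$ still satisfies $\min_i\{d'_i+f_i\}\ge 3$, and its excess is strictly smaller, so by induction $(D',F)$ has edge-disjoint caterpillar realizations $T_{D'}$ and $T_F$. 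Once $D$ has been reduced all the way to a path, the sequence $F$ is treated symmetrically.

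It remains to lift a realization of $(D',F)$ to one of $(D,F)$. In $T_{D'}$ the vertex $u$ has degree $2$, hence lies on the spine between two neighbours. I would arrange for one of these neighbours to be $w$ and call the other one $q$; then the \emph{bypass} move that deletes $\{u,q\}$ and adds $\{w,q\}$ turns $u$ into a pendant leaf hanging off $w$, raises $\deg(w)$ by one and lowers $\deg(u)$ to one, thereby realizing $D$ while preserving both connectivity and the caterpillar structure (the spine is unchanged except that $w$ and $q$ become consecutive). Since $\{w,q\}$ was not already present in the tree $T_{D'}$, the new caterpillar is edge-disjoint from $T_F$ provided only that the single new edge $\{w,q\}$ does not belong to $E(T_F)$.

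The main obstacle is precisely this last requirement: I must guarantee both that $u$ can be made adjacent to $w$ on the spine of $T_{D'}$ and that the resulting bypass edge avoids $E(T_F)$. I expect to handle this by strengthening the inductive hypothesis so that $T_{D'}$ may be chosen with $u$ placed in a prescribed spine position next to $w$, and then by a short counting argument at $w$: there are several admissible choices for the partner $q$, whereas $T_F$ forbids only $f_w$ edges at $w$, and the degree budget governed by $d_w+f_w$ should leave room to select a $q$ with $\{w,q\}\notin E(T_F)$. Verifying that these counts always work out, and checking the symmetric reduction of $F$, is the technical heart of the argument.
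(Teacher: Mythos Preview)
Your reduction to the two-path base case via Lemma~\ref{hamil} is the right target, but your inductive scheme differs from the paper's and the step you yourself flag as ``the main obstacle'' is a genuine gap that your sketch does not close. You keep $n$ fixed and shift one unit of degree inside $D$ from a vertex $w$ with $d_w\ge 3$ to a leaf $u$; the lift then requires that $u$ sit next to $w$ on the spine of $T_{D'}$ and that the bypass edge $\{w,q\}$ avoid $E(T_F)$. But the inductive call hands you a specific pair $(T_{D'},T_F)$ with no such control. Your proposed strengthening---being allowed to place $u$ in a prescribed spine slot adjacent to $w$---is never formulated precisely, and even granting it, the partner $q$ is then essentially forced (one of the at most two original spine neighbours of $w$), while $T_F$ may forbid both of those as soon as $f_w\ge 2$; the ``degree budget'' heuristic does not rescue this. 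Strengthening further so as to constrain $T_F$ as well means you must re-establish that same constraint for $(D,F)$ after the bypass, and your single local move gives you no handle on that.

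The paper sidesteps the whole conflict by inducting on $n$ rather than on the excess. It selects indices $i,j$ with $d_i\ge 3$, $d_j=1$, $f_j=2$, \emph{deletes the vertex} $v_j$ from both sequences (also decreasing $d_i$ by one), applies the hypothesis on $n-1$ vertices, and then reinserts $v_j$: as a pendant leaf on $v_i$ in the first caterpillar, and as a subdivision point of some spine edge $v_kv_\ell$ with $k,\ell\ne i$ in the second. Because $v_j$ is a fresh vertex whose sole edge in $T_1$ goes to $v_i$, the two new edges $v_kv_j$ and $v_jv_\ell$ in $T_2$ are automatically absent from $T_1$---no strengthened hypothesis and no counting are needed. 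The only residual check is that the spine of $T'_2$ contains an edge avoiding $v_i$, which holds since that spine has at least three edges. This vertex-deletion trick is the idea your argument is missing.
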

\begin{proof}
The proof is by induction on $n$. Observe that the smallest possible $n$ is $4$ to accommodate at least $4 = 2 \times 2$ leaves (note that each tree has at least two leaves). For $n=4$, the only possible pair of degree sequences is $(2, 2, 1, 1)$ and $(1, 1, 2, 2)$. By Lemma~\ref{hamil}, these sequences have edge disjoint realizations.

If $n > 4$ and both $D$ and $F$ are path sequences, then there exists edge disjoint Hamiltonian paths, according to Lemma~\ref{hamil}. 

So we may suppose that not both are path sequences. As the sum of the degrees in $D+F$ is $4n-4$, there are at least four indices where $d_j+f_j=3$, it is easy to check that we can select indices $i$ and $j$ such that, possibly after reversing $D$ and $F$, we have $d_i\ge 3$, $d_j=1$ and $f_j=2$.

Modify $D$ and $F$ by removing $d_j$ and $f_j$ and decreasing $d_i$ by $1$. This modified $D'$ and $F'$ are tree degree sequences without common leaves on $n-1$ vertices, therefore, by induction, $D'$ and $F'$ have edge disjoint caterpillar realizations,  $T'_1$ and $T'_2$. Modify $T'_1$ and $T'_2$ as follows. Add back vertex $v_j$ and connect it to vertex $v_i$ in $T'_1$. The so obtained $T_1$ is a realization of $D$. Take a path $P$ in $T'_2$ containing all non-leaf vertices and two leaves. Observe that $P$ has at least 3 edges, since otherwise $F$ has $n-2$ leaves, so $D$ has only two, contradicting to $d_i\ge 3$. Hence $P$ has an edge $v_kv_\ell$ such that $k\ne i$ and $\ell\ne i$. For constructing $T_2$, replace edge  $v_kv_\ell$ of $T'_2$ by two edges, $v_kv_j$ and $v_jv_\ell$. The tree $T_2$ thus obtained is a caterpillar, edge disjoint from $T_1$ and is a realization of $F$.
\end{proof}

The theorem implicitly states that if two degree sequences do not share common leaves then their sum is graphical. If the two trees have common leaves, their sum is not necessarily graphical. The simplest example for it is the degree sequences
$$
\begin{array}{cccc}
D=& 2, & 1, & 1\\
F=& 2, & 1, & 1
\end{array}
$$
Observe that the largest degree in $D+F$ is $4$, and there are only $3$ vertices.

However, if their sum happens to be graphical then they also have edge disjoint realizations, as was shown by Kundu in \cite{kundutree}.

\begin{theorem} 
\begin{sloppypar}\cite{kundutree} Let $D = d_1, d_2, \dots, d_n$ and ${F = f_1, f_2, \dots, f_n}$ be two tree degree sequences. Then there exist edge disjoint tree realizations of $D$ and $F$ if and only if $D+F$ is graphical.\label{theo:kundutree}
\end{sloppypar}
\end{theorem}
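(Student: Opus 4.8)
The plan is to prove the two directions separately. The forward (``only if'') direction is immediate: if $T_1$ and $T_2$ are edge-disjoint trees realizing $D$ and $F$, then their union is a simple graph (both are simple and they share no edge) in which vertex $v_i$ has degree $d_i + f_i$, so $D + F$ is graphical.

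For the harder (``if'') direction I would argue by induction on $n$, splitting on whether the two trees are forced to share a leaf. If $\min_i \{d_i + f_i\} \ge 3$, then no index is a leaf in both sequences, and Theorem~\ref{theo:caterpillar} already produces edge-disjoint (caterpillar) realizations; in particular the graphicality hypothesis is not even needed in this case. So the substantive case is when some index $j$ has $d_j + f_j = 2$, i.e.\ $d_j = f_j = 1$, a \emph{common leaf}. (Note that for $n = 4$ the total $\sum_i (d_i + f_i) = 12$ forces every $d_i + f_i = 3$ once the sum is graphical, so a common leaf can occur only for $n \ge 5$, and the induction will descend to $n-1 \ge 4$.)

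The reduction I would use removes the common leaf $v_j$ and transfers its two incident demands --- one unit to $D$ and one unit to $F$ --- onto two \emph{distinct} other vertices. Concretely, fix a realization $G$ of the graphical sequence $S := D + F$; since $s_j = 2$, vertex $v_j$ has exactly two distinct neighbours $p$ and $q$ in $G$, and $G - v_j$ realizes the sequence $S'$ obtained from $S$ by deleting $s_j$ and decreasing $s_p$ and $s_q$ each by $1$. If I can charge the decrement at $p$ to $D$ (requiring $d_p \ge 2$) and the one at $q$ to $F$ (requiring $f_q \ge 2$), or symmetrically, then I obtain tree sequences $D'$ and $F'$ on $n-1$ vertices with $D' + F' = S'$ graphical; by induction these have edge-disjoint tree realizations $T_1'$ and $T_2'$. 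Reattaching $v_j$ as a leaf of $p$ in $T_1'$ and as a leaf of $q$ in $T_2'$ restores the degrees $D$ and $F$, and since $p \ne q$ the two new edges are distinct and absent from $T_1', T_2'$, so the resulting trees are edge-disjoint.

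The main obstacle is precisely the legality of this charging: it fails when a neighbour of $v_j$, say $p$, is itself a common leaf ($d_p = f_p = 1$), so that neither $d_p \ge 2$ nor $f_p \ge 2$ holds and $p$ can absorb no decrement. To handle this I would show that one may always choose the realization $G$ so that $v_j$ is adjacent to a non-leaf of $D$ and to a non-leaf of $F$; such non-leaves exist because each tree sequence on $\ge 3$ vertices has a vertex of degree $\ge 2$. Reaching such a realization from an arbitrary one is an edge-swap ($2$-switch) argument --- replace $v_j p$ and a suitable edge $uw$ with $u$ a non-leaf by $v_j u$ and $p w$ --- and the care needed to guarantee that a valid switch exists in every degenerate configuration, equivalently a direct verification that $S'$ satisfies the Erd\H{o}s--Gallai inequalities for an admissible choice of the two decremented indices, is where the real work of the proof lies.
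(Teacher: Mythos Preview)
The paper does \emph{not} give its own proof of this theorem: it is quoted from Kundu~\cite{kundutree} and stated without argument, so there is no in-paper proof to compare your proposal against. Your outline is therefore not a reconstruction of anything in the paper; it is an independent attempt, and in fact it \emph{uses} the paper's own Theorem~\ref{theo:caterpillar} for the case $\min_i\{d_i+f_i\}\ge 3$, which Kundu's 1974 argument certainly could not have done.

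As an outline your reduction is sound: the forward direction is trivial, the base case $n=4$ is handled correctly (graphicality of $D+F$ on four vertices with total $12$ indeed forces all sums equal to $3$), and the inductive step of deleting a common leaf $v_j$ and charging its two units to distinct vertices $p\in\{i:d_i\ge 2\}$ and $q\in\{i:f_i\ge 2\}$ yields tree sequences $D',F'$ on $n-1$ vertices with $D'+F'$ graphical, after which reattaching $v_j$ works exactly as you say.

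The honest gap you flag is real, though, and it is the entire content of the proof. You need that \emph{some} realization $G$ of $D+F$ has $N_G(v_j)=\{p,q\}$ with $p,q$ lying in $\{i:d_i\ge 2\}\cup\{i:f_i\ge 2\}$ and meeting both sets; equivalently, neither neighbour of $v_j$ is itself a common leaf, and the two neighbours are not both confined to one side. Your one-line $2$-switch sketch (``replace $v_jp$ and $uw$ by $v_ju$ and $pw$'') does not yet establish this: one has to rule out that every candidate switch is blocked by an existing edge $pw$ or $v_ju$, and this is exactly the kind of case analysis that can go wrong when the degree-$2$ vertices cluster together. So as written this is a plausible plan with the key lemma still to be proved, not a proof; Kundu's original argument (which the paper simply cites) supplies precisely this missing piece.
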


However, there are tree degree sequences that have edge disjoint tree realizations but do not have edge disjoint caterpillar realizations. For example, consider the following tree degree sequences
$$
\begin{array}{cccccccccccc}
D=& 5, & 2, & 2, & 2, & 2, & 2, & 1, & 1, & 1, & 1, & 1 \\
F=& 5, & 2, & 2, & 2, & 2, & 2, & 1, & 1, & 1, & 1, & 1 \\
\end{array}
$$
They have edge disjoint realizations, according to Theorem~\ref{theo:kundutree} (see also Fig.~\ref{fig:twotree}), since their sum is graphical. 
 We claim that they do not have edge disjoint caterpillar realizations. To see this, observe that in any caterpillar realization, the degree $5$ vertices must be connected to at least $3$ leaves. However, there are only $5$ vertices that are leaves in any of the trees, showing that any pair of caterpillar realizations will share at least one edge.
\begin{figure}
\includegraphics[scale=0.33]{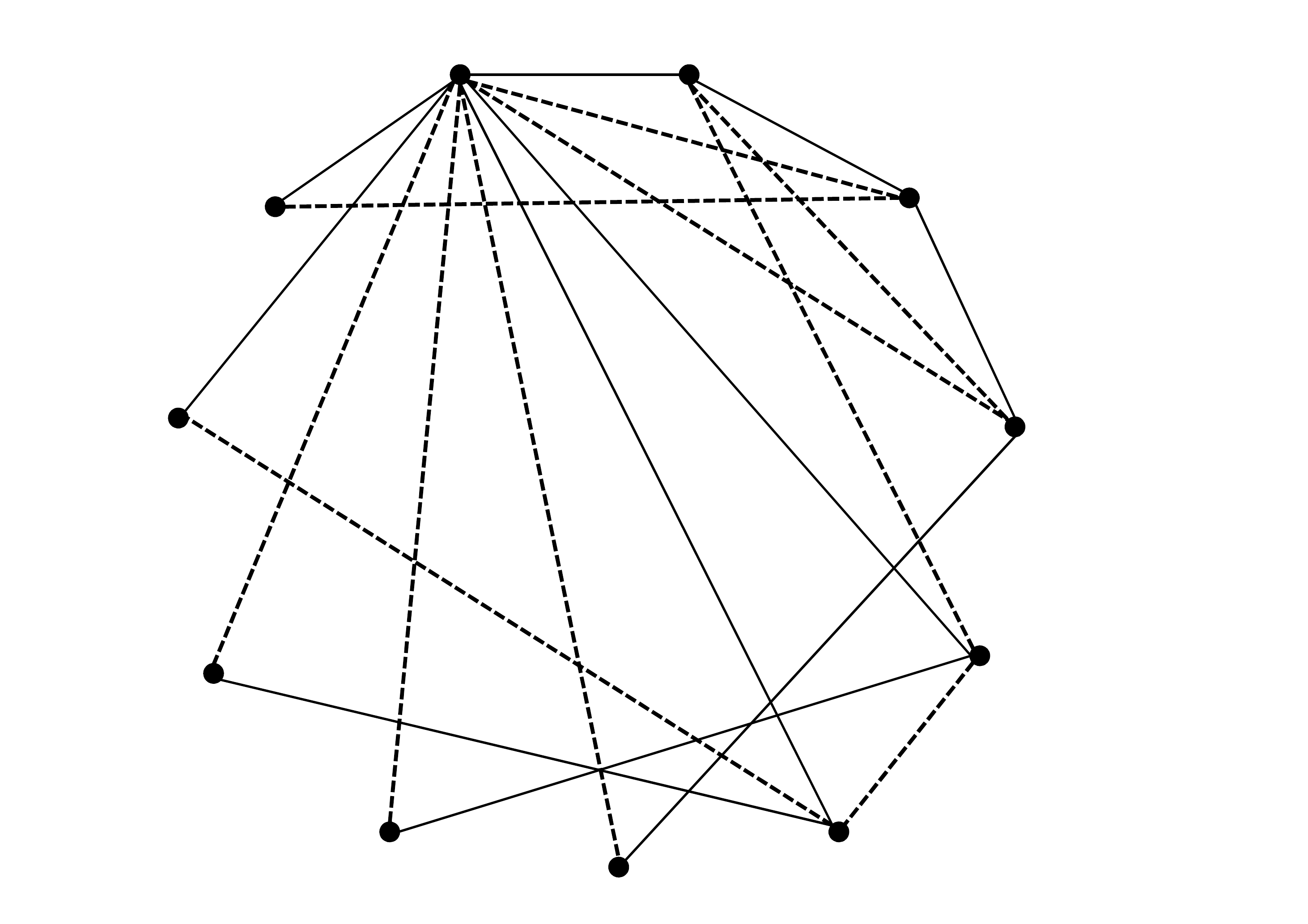}
\caption{Edge disjoint realization of two degree sequences, both of them are $5,2,2,2,2,2,1,1,1,1,1$}.\label{fig:twotree}
\end{figure}
 
Theorem~\ref{theo:caterpillar} considered the case when the leaf vertices of the degree sequences do not coincide. Now we turn to the opposite end, namely when each vertex is a leaf in at least one of the sequences.

\begin{theorem}\label{theo:random}
Let $D = d_1, d_2, \dots, d_n$ and $F = f_1, f_2, \dots, f_n$ be tree degree sequences such that $\min(d_i,f_i)=1$ for all $i$. Let $T_1$ and $T_2$ be random realizations of $D$ and $F$ uniformly distributed. Then the expected number of common edges of $T_1$ and $T_2$ is $1$.
\end{theorem}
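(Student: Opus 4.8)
The plan is to use linearity of expectation together with the independence of $T_1$ and $T_2$. Let $X$ denote the number of common edges. Writing $X=\sum_{\{a,b\}}\mathbf{1}[\{a,b\}\in T_1]\,\mathbf{1}[\{a,b\}\in T_2]$, where the sum runs over the $\binom{n}{2}$ candidate pairs, and using that $T_1,T_2$ are chosen independently, we get
\[
E[X]=\sum_{\{a,b\}}P(\{a,b\}\in T_1)\,P(\{a,b\}\in T_2).
\]
Thus everything reduces to computing, for a uniformly random tree realization of a fixed tree degree sequence, the probability that a prescribed pair $\{a,b\}$ is an edge.

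The key computation is the claim that for a uniform random tree $T$ with degree sequence $D=d_1,\dots,d_n$ one has $P(\{a,b\}\in T)=(d_a+d_b-2)/(n-2)$. I would first recall the generalized Cayley formula obtained from Pr\"ufer codes: the number of labeled trees with degree sequence $D$ is $(n-2)!/\prod_i(d_i-1)!$, since vertex $i$ occurs exactly $d_i-1$ times in the code. To count those trees containing the edge $\{a,b\}$, I would contract that edge, producing a tree on $n-1$ vertices whose merged vertex has degree $d_a+d_b-2$ and whose other degrees are unchanged; reconstructing $T$ amounts to splitting the $d_a+d_b-2$ edges at the merged vertex into $d_a-1$ going to $a$ and $d_b-1$ going to $b$, which can be done in $(d_a+d_b-2)!/((d_a-1)!(d_b-1)!)$ ways. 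Multiplying this multiplicity by the count $(n-3)!/\big((d_a+d_b-3)!\prod_{i\neq a,b}(d_i-1)!\big)$ of the contracted trees and dividing by the total count, the factorials telescope to leave exactly $(d_a+d_b-2)/(n-2)$. As a sanity check, $\sum_{b\neq a}(d_a+d_b-2)/(n-2)=d_a$, the prescribed degree.

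Finally I would substitute these probabilities and use the hypothesis. Set $g_i:=d_i-1$ and $h_i:=f_i-1$; both are nonnegative with $\sum_i g_i=\sum_i h_i=n-2$ (the tree condition), and the assumption $\min(d_i,f_i)=1$ is precisely the statement $g_ih_i=0$ for every $i$. Then
\[
E[X]=\frac{1}{(n-2)^2}\sum_{\{a,b\}}(g_a+g_b)(h_a+h_b).
\]
Expanding over ordered pairs, the diagonal contributions $g_ah_a$ and $g_bh_b$ vanish because $g_ih_i=0$, while each cross term satisfies $\sum_{a\neq b}g_ah_b=\sum_a g_a\big((n-2)-h_a\big)=(n-2)^2$. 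This gives $2(n-2)^2$ over ordered pairs, hence $(n-2)^2$ over unordered pairs; the prefactor cancels and $E[X]=1$.

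The main obstacle is the edge-probability lemma: setting up the contraction/reconstruction bijection correctly, getting the split multiplicity right, and verifying it against the generalized Cayley count. Once $P(\{a,b\}\in T)=(d_a+d_b-2)/(n-2)$ is established, the remainder is a short algebraic manipulation in which the structural condition $g_ih_i=0$ does all the work.
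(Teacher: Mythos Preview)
Your proof is correct and follows essentially the same route as the paper: the same edge-probability lemma, proved the same way via contraction and the generalized Cayley formula, followed by linearity of expectation and independence. The only cosmetic difference is in the final summation---the paper partitions the vertices into $\mathcal{A}=\{i:d_i>1\}$ and $\mathcal{B}=\{j:f_j>1\}$ and notes that only pairs $(i,j)\in\mathcal{A}\times\mathcal{B}$ contribute, giving $\sum_{i\in\mathcal{A}}\sum_{j\in\mathcal{B}}\frac{(d_i-1)(f_j-1)}{(n-2)^2}=1$, whereas you expand $(g_a+g_b)(h_a+h_b)$ directly and kill the diagonal via $g_ih_i=0$; these are two ways of writing the same computation.
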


\begin{proof}
The proof is based on the following lemma.

\begin{lemma}
Let $T$ be a random realization of the tree degree sequence $D= d_1, d_2, \dots, d_n$. Then the probability that there is an edge between $v_i$ and $v_j$ is
\begin{equation*}
\frac{d_i+d_j-2}{n-2}.
\end{equation*}
\end{lemma}
\begin{proof}
It is well known that the number of trees with a given degree sequence is
\begin{equation}
\frac{(n-2)!}{\prod_{k=1}^n (d_k-1)!}.\label{eq:number-of-trees}
\end{equation}
Let $T'$ denote those trees in which $v_i$ and $v_j$ are connected. Let $f$ be a mapping from $T'$ to the trees with degree sequence
$$
d_1,\dots,d_{i-1}, d_{i+1},\dots, d_{j-1},d_{j+1},\dots, d_n, d_i+d_j-2
$$
obtained by joining $v_i$ and $v_j$ to a common vertex. The function $f$ is surjective and each tree is an image ${d_i + d_j - 2 \choose d_i-1}$ times. Therefore the number of trees in which $v_i$ is connected to $v_j$ is
\begin{eqnarray}
&\frac{(n-3)!}{(d_i+d_j-3)!\prod_{k\ne i,j} (d_k-1)!} \frac{(d_i +d_j -2)!}{(d_i-1)!(d_j-1)!} =&\nonumber\\
&\frac{(d_i+d_j-2)(n-3)!}{\prod_{k=1}^n(d_k-1)!}.& \label{eq:v_i-v_j-trees}
\end{eqnarray} 
The probability that $v_i$ and $v_j$ is connected is the ratio of \eqref{eq:v_i-v_j-trees} and \eqref{eq:number-of-trees}, which is indeed
$$
\frac{d_i+d_j-2}{n-2},
$$ 
thus concluding the proof of the lemma.
\end{proof}

Now we turn to the proof of the theorem. Let $D$ and $F$ be the two degree sequences satisfying that each vertex is a leaf in at least one of the trees. Define
\begin{eqnarray*}
\mathcal{A} & := & \{i \;|\; d_i > 1 \wedge f_i = 1 \}, and \\
\mathcal{B} & := & \{i \;|\;  d_i = 1 \wedge f_i > 1 \}.  
\end{eqnarray*}
Note that there might be parallel edges in the two trees only between these two sets. The expected number of parallel edges is then
\begin{eqnarray*}
&\sum_{i \in \mathcal{A}} \sum_{j\in \mathcal{B}} \frac{(d_i-1)(f_j-1)}{(n-2)^2}  = 
\sum_{i \in \mathcal{A}} \frac{d_i-1}{n-2} \sum_{j\in \mathcal{B}} \frac{f_j-1}{n-2}=&
\nonumber \\
&\sum_{i = 1}^n \frac{d_i - 1}{n-2} \sum_{j = 1}^n \frac{f_j-1}{n-2}  =  1,&
\end{eqnarray*}
since $d_i=1$ for all $i \in \bar{\mathcal{A}}$, $f_j=1$ for all $j \in \bar{\mathcal{B}}$, and the sum of the degrees decreased by $1$ is $n-2$ for any tree degree sequence. This finishes the proof of the theorem.
\qed
\end{proof}

Theorem~\ref{theo:random} implies a characterization of realizability for a subclass of tree degree sequences.

\begin{corollary} \label{cor:not-star-tree}
Let $D = d_1,\dots,d_n$ and $F = f_1,\dots,f_n$ be tree degree sequences such that each vertex is a leaf in at least one of them. Then $D$ and $F$ have edge-disjoint tree realizations if and only if $d_i < n-1$ and $f_i < n-1$ for all $i$.
\end{corollary}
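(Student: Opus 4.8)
The plan is to establish the two directions separately, obtaining the hard (sufficiency) direction as a consequence of Theorem~\ref{theo:random}. For necessity I would argue contrapositively: if some $d_i = n-1$ (the case $f_i = n-1$ is symmetric), then in every realization $T_1$ of $D$ the vertex $v_i$ is joined to all other vertices, so $T_1$ is a star. Since every vertex is a leaf in at least one of the trees and $d_i > 1$, the index $i$ must satisfy $f_i = 1$; hence in every realization $T_2$ of $F$ the unique edge at $v_i$ is also an edge of the star $T_1$, and $T_1, T_2$ necessarily share an edge. Thus edge-disjoint realizations cannot exist, which is exactly the required contrapositive.

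For sufficiency, assume $d_i < n-1$ and $f_i < n-1$ for every $i$, and retain the sets $\mathcal{A} = \{i : d_i > 1,\ f_i = 1\}$ and $\mathcal{B} = \{i : d_i = 1,\ f_i > 1\}$ from the proof of Theorem~\ref{theo:random}. I would first record that $|\mathcal{A}| \ge 2$ and $|\mathcal{B}| \ge 2$: since $\sum_{i \in \mathcal{A}}(d_i-1) = n-2$ and each summand is at most $n-3$ under the hypothesis $d_i \le n-2$, a single index cannot account for the whole sum, and symmetrically for $\mathcal{B}$ (in particular $n \ge 4$). Now let $X$ be the number of common edges of two independent uniformly random tree realizations $T_1$ and $T_2$. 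By Theorem~\ref{theo:random}, $E[X] = 1$, and $X$ takes nonnegative integer values. The pivotal remark is that if $P(X = 0) = 0$ then $X \ge 1$ almost surely, so $E[X] \ge 1 + P(X \ge 2)$; as $E[X]=1$ this would force $P(X \ge 2) = 0$. Contrapositively, it suffices to produce a single pair of realizations sharing at least two edges, for then $P(X \ge 2) > 0$ and hence $P(X=0)>0$, which yields a pair of edge-disjoint tree realizations.

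To produce such a pair I would construct realizations with two prescribed common edges. Choose distinct $a_1, a_2 \in \mathcal{A}$ and distinct $b_1, b_2 \in \mathcal{B}$; the four indices are distinct because $\mathcal{A}$ and $\mathcal{B}$ are disjoint. For $D$, delete $b_1, b_2$ and lower $d_{a_1}, d_{a_2}$ by one: the result is a tree degree sequence on $n-2$ vertices (all entries remain positive because $d_{a_1}, d_{a_2} \ge 2$, and the degree sum is $2(n-2)-2$), so it has a tree realization, and reattaching $b_1$ as a leaf at $a_1$ and $b_2$ as a leaf at $a_2$ gives a realization $T_1$ of $D$ containing the edges $a_1 b_1$ and $a_2 b_2$. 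Performing the mirror-image construction on $F$, where now $b_1, b_2$ are the internal vertices and $a_1, a_2$ the attached leaves, produces a realization $T_2$ of $F$ containing the same two edges. The pair $(T_1, T_2)$ shares $a_1 b_1$ and $a_2 b_2$, so $X \ge 2$ for it, completing the chain of implications.

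I expect the crux to be the sufficiency direction, precisely the step of converting the averaged identity $E[X]=1$ into the existence of a realization pair with no common edge. The nonnegativity-and-expectation dichotomy makes this step essentially free once we know the distribution of $X$ is not concentrated at the value $1$, so the weight of the argument rests on the explicit construction of a pair sharing two edges; that construction is in turn available only because the hypothesis $d_i, f_i < n-1$ guarantees $|\mathcal{A}|, |\mathcal{B}| \ge 2$, which is where the assumption is used in an essential way.
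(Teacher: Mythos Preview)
Your proof is correct and follows essentially the same route as the paper: for sufficiency you exhibit a pair of realizations sharing two edges (using $|\mathcal{A}|,|\mathcal{B}|\ge 2$) and combine this with $E[X]=1$ from Theorem~\ref{theo:random} to force the existence of an edge-disjoint pair, which is exactly the paper's argument (though your probabilistic bookkeeping via $E[X]\ge 1+P(X\ge 2)$ is a bit more explicit than the paper's ``above the average'' phrasing). For necessity the paper simply notes that $D+F$ would have a vertex of degree $n$ and hence is not graphical, whereas you argue directly that the star forces a shared edge; both are fine and equally short.
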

\begin{proof}
If $\max_i\{d_i\} = n-1$ or $\max_i\{f_i\} = n-1$ then $D+F$ is not graphical.
On the other hand, if none of the trees is a star, then there are four distinct indices such that $i_1,i_2 \in \mathcal{A}$ and $j_1,j_2\in\mathcal{B}$. Then there exists a pair of trees $T_1$ and $T_2$ such that both trees contain edges $(v_{i_1},v_{j_1})$ and $(v_{i_2},v_{j_2})$ and $T_1$ realizes $D$ while $T_2$ realizes $F$. Indeed, the degree $1$ vertices can be connected to any of the non-leaf vertices. This means that there are trees having at least $2$ common edges, which is above the average. Hence there must be a pair of trees with less than average number of common edges. That is, they are edge disjoint realizations. 
\end{proof}

This theorem will be useful also at generating random realizations, see the next section.

Similar theorem holds for arbitrary number of tree sequences.
We need a preliminary lemma (with $V=\{v_1,\ldots,v_n\}$).

\begin{lemma}\label{hart-of-sloppypar}
  Let $D=d_1,\dots,d_n$ be a tree degree sequence, $n>m>2$ and $U=\{v_i \;|\; d_i>1\}$. Suppose  $V_1,\ldots,V_{m-1}$ are pairwise disjoint sets in $L=V\setminus U$. Suppose further that $|U|>1, |V_1|>1,\ldots,|V_{m-1}|>1$ and $d_i\le n-m$ for all $i$. Then there is a tree $T$ realizing $D$, such that for all $j$ its restriction to $U\cup V_j$ is a non-star tree.
\end{lemma}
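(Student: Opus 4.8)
The plan is to construct a suitable realization $T$ explicitly, controlling the tree $T[U]$ induced on the non-leaf vertices and then distributing the leaves. First I would record two structural facts that hold in every realization: since $n>m>2$ forces $n\ge 4$, no two leaves of $T$ are adjacent, so every vertex of $L$ (in particular every vertex of each $V_j$) is attached to a vertex of $U$; and the non-leaf vertices of any tree induce a subtree, so $T[U]$ is itself a tree and consequently each restriction $T[U\cup V_j]$ is connected and acyclic, i.e.\ a tree. Thus the only thing left to arrange is that none of these trees is a star. Since a tree on at least four vertices is a non-star exactly when it contains a path on four vertices $P_4$, and $|U\cup V_j|\ge 2+2=4$, the task reduces to building a realization $T$ of $D$ in which each $T[U\cup V_j]$ contains a $P_4$.

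For the construction I would take $T[U]$ to be a path $u_1u_2\cdots u_k$ spanning $U$ (where $k=|U|$) and then attach the $|L|$ leaves to the vertices of $U$ to make up the degrees, giving each $u_i$ exactly $d_i-\deg_{T[U]}(u_i)$ pendant leaves. A leaf-slot count makes this always feasible: the endpoints need $d_i-1\ge 1$ leaves and the internal path vertices $d_i-2\ge 0$, while the total number of free slots $\sum_{u_i\in U}(d_i-\deg_{T[U]}(u_i))$ equals $(n-2+k)-2(k-1)=n-k=|L|$, so any assignment of the leaves respecting these slot counts yields a realization of $D$.

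If $k=|U|\ge 4$, the path $u_1\cdots u_k$ already contains a $P_4$ inside $U$, hence every $T[U\cup V_j]\supseteq T[U]$ is a non-star and any leaf assignment works. The interesting cases are $k\in\{2,3\}$, and this is exactly where the hypothesis $d_i\le n-m$ is used. For $k=3$ with $T[U]=u_1u_2u_3$, placing a single leaf of $V_j$ on an endpoint creates the $P_4$ formed by that leaf together with $u_1u_2u_3$, so it suffices to reserve, for each of the $m-1$ sets, one of its leaves on an endpoint. The two endpoints carry $n-1-d_{u_2}$ free slots in total, which is $\ge m-1$ precisely because $d_{u_2}\le n-m$, so there is room to place these $m-1$ distinguished leaves (each $V_j$ has $\ge 2$ leaves to spare). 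For $k=2$ with $T[U]$ the edge $u_1u_2$, a $P_4$ needs a leaf of $V_j$ on each of $u_1,u_2$; here $d_{u_1}+d_{u_2}=n$ together with $d_{u_i}\le n-m$ forces both $d_{u_i}\ge m$, so each side has $d_{u_i}-1\ge m-1$ free slots, enough to reserve one leaf of every $V_j$ on each side (using $|V_j|\ge 2$). In either case I would place the reserved leaves first and then fill the remaining slots arbitrarily with the leftover leaves; since the slot and leaf counts match, the result is a realization of $D$ in which each $T[U\cup V_j]$ contains a $P_4$.

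The main obstacle is the small-$|U|$ regime: when $T[U]$ is itself a star (a single edge or a $P_3$), the non-star property cannot come from $T[U]$ and must be forced through the pendant leaves, so one has to guarantee that enough leaf-slots sit at the right vertices to host one distinguished leaf from each of the $m-1$ sets simultaneously. The bound $d_i\le n-m$ is tailored exactly to this: it lower-bounds the free-slot capacity at the endpoints (resp.\ at both ends of the edge) by $m-1$, which is the crux of the whole argument, while the remaining bookkeeping of the final leaf distribution is routine.
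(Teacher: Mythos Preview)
Your argument is correct, and for $|U|=2$ it coincides with the paper's almost verbatim. For $|U|\ge 3$, however, you take a different and in fact more careful route. The paper starts from an arbitrary realization and, whenever $T[U]$ is a star, applies a single edge swap to make $T[U]$ a non-star; you instead build $T[U]$ explicitly as a path and then control the leaf attachments.

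This buys you something real. When $|U|=3$, the induced tree $T[U]$ is necessarily a path $P_3=K_{1,2}$, which \emph{is} a star, and the paper's swap (remove $uu_2$, $u_1w$; add $u_1u_2$, $uw$) merely recentres this $P_3$ rather than destroying it; so the paper's case ``$|U|>2$'' does not actually cover $|U|=3$ as written. Your separate treatment of $k=3$ via the endpoint-slot count $d_{u_1}+d_{u_3}-2=n-1-d_{u_2}\ge m-1$ (from $d_{u_2}\le n-m$) guarantees one leaf of each $V_j$ at an end of the path and hence a $P_4$ in every $T[U\cup V_j]$, which is exactly what is needed and fills this gap cleanly. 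A pleasant by-product of your construction is that for $|U|\ge 4$ the path on $U$ already contains a $P_4$, so the hypothesis $d_i\le n-m$ is not used there at all; it is needed only in the small-$|U|$ regime, matching your analysis of where the real obstacle lies.
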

\begin{proof}
  For any tree realization $T$, its restriction to $U\cup V_j$ is a tree because outside  $U$  there are only leaves. In the case $|U|>2$ we claim that there is a tree realization $T$ such that its restriction to $U$ is not a star. Indeed, if $T'$ restricted to $U$ is a star centered at $u\in U$, then by the degree bound there is a leaf $w\in L$ not connected $u$, call its neighbor $u_1\in U$. Let $u_2$ be a third vertex of $U$. Replacing edges $uu_2$ and $u_1w$ by edges  $u_1u_2$ and $uw$ gives another tree realization $T$, whose restriction to $U$ is not a star.

  For the case $|U|=2$ let $U=\{v_i,v_j\}$ and connect first $v_1$ to $v_j$. Now $d_i+d_j=n$, so $d_i\ge m$ and $d_j\ge m$. For each $k\le m-1$ connect one vertex of $V_k$ to $v_i$ and another one to $v_j$. The remaining leaves in  $L$ can be distributed easily, connect any $d_i-m$ of them  to $v_i$ and the remainder to $v_j$ giving the aimed tree realization. 
\end{proof}

\begin{theorem}\begin{sloppypar}
Let $D_1,D_2,\dots, D_m$ be tree degree sequences with $D_i = d_{i,1},d_{i,2},\dots, d_{i,n}$ such that each vertex is a leaf in all except at most one of them. Then $D_1,D_2,\dots, D_m$ have edge disjoint realizations if and only if $\max_{i,j}\{d_{i,j}\} \le n-m$. 
\end{sloppypar}
\end{theorem}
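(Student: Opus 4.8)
The plan is to prove the two directions separately, and necessity is the easy one. Suppose $d_{i_0,j_0}\ge n-m+1$ for some indices $i_0,j_0$. Since each vertex is a non-leaf in at most one of the sequences, $v_{j_0}$ is a non-leaf in $D_{i_0}$ and a leaf in each of the other $m-1$ sequences, contributing degree $1$ to each of them. Hence in any edge-disjoint realization the total degree of $v_{j_0}$ over all $m$ graphs is at least $(n-m+1)+(m-1)=n$. But in pairwise edge-disjoint simple graphs on $n$ vertices this total can be at most $n-1$, a contradiction. Therefore $\max_{i,j}\{d_{i,j}\}\le n-m$ is necessary.

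For sufficiency I would first record the structural picture. Write $U_i=\{v_j \mid d_{i,j}>1\}$; the hypothesis says the sets $U_1,\dots,U_m$ are pairwise disjoint, and every vertex of $U_k$ is a leaf of every $D_i$ with $i\ne k$. Because $\max d\le n-m\le n-2$ for $m\ge 2$, no $D_i$ is a star, so $|U_i|\ge 2$ for all $i$, and one checks $m\le n-2$, hence $n>m$. As in the proof of Theorem~\ref{theo:random}, a common edge of two realizations $T_a$ and $T_b$ cannot have both endpoints a leaf in one of the trees, so such an edge must join $U_a$ to $U_b$. Thus all pairwise conflicts are confined to the ${m \choose 2}$ bipartite ``cores'' between $U_a$ and $U_b$, while edges inside a single $U_i$, edges from $U_i$ to $W=V\setminus\bigcup_k U_k$, and edges routed in the non-conflicting direction carry no risk of collision.

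The key step is to realize the sequences with enough freedom on these cores, and this is exactly what Lemma~\ref{hart-of-sloppypar} is built for: applying it to $D=D_i$ with $U=U_i$ and with the disjoint leaf sets taken to be the other non-leaf sets $\{U_k \mid k\ne i\}$ (each of size at least $2$ and contained in the leaf set of $D_i$, with the degree bound $d_{i,\cdot}\le n-m$ being precisely the lemma's hypothesis) yields a realization $T_i$ whose restriction to every core $U_i\cup U_k$ is a non-star tree. I would then argue, in the spirit of Corollary~\ref{cor:not-star-tree}, that the non-star structure on both sides of a core $U_a\cup U_b$ leaves enough room to redirect a shared edge into $W$ or into a complementary slot, the tight bound $\max d\le n-m$ guaranteeing at least one such free slot at every core vertex; I would also set up the outer induction on $m$, with base case $m=2$ supplied by Corollary~\ref{cor:not-star-tree}, inserting the tree $T_m$ last into $K_n$ minus the already-placed edge-disjoint forest $T_1,\dots,T_{m-1}$, where the degree budget at each vertex is exactly tight.

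The main obstacle I anticipate is \emph{simultaneous} consistency: repairing a conflict on the core $U_a\cup U_b$ must not create a new one on some other core $U_a\cup U_c$, and the insertion of the last tree into the complement of a forest is a spanning-tree-with-prescribed-degrees problem in which the budget never has slack to spare. I expect the delicate part to be a defect/potential argument showing the total number of core conflicts can be decreased to zero one rotation at a time without overrunning any vertex's free capacity; this is precisely where the bound $n-m$ is used, and it is also the reason the pure averaging argument of Corollary~\ref{cor:not-star-tree} (expected conflicts equal to ${m \choose 2}$) does not by itself suffice, since a below-average configuration need not be conflict-free once $m>2$.
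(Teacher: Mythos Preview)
Your necessity argument and structural setup are correct and match the paper: the sets $U_i$ are pairwise disjoint, $|U_i|\ge 2$, and any conflict between realizations $T_a$ and $T_b$ must be an edge inside the core $U_a\cup U_b$. Your invocation of Lemma~\ref{hart-of-sloppypar} to produce trial trees $T_i$ whose restriction to each $U_i\cup U_k$ is a non-star tree is also exactly what the paper does.

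Where you go astray is in the finish. You propose to ``redirect a shared edge into $W$'' and worry about cascading conflicts, tight degree budgets, and an outer induction on $m$. None of this is needed, because the cores are \emph{independent}. The crucial observation you are missing is that for $j\notin\{a,b\}$ every vertex of $U_a\cup U_b$ is a leaf in $T_j$, so $T_j$ contains no edge with both endpoints in $U_a\cup U_b$. Consequently, if you take the two restricted subtrees $T_a|_{U_a\cup U_b}$ and $T_b|_{U_a\cup U_b}$ (both non-star trees by construction, with each vertex a leaf in at least one of them) and apply Corollary~\ref{cor:not-star-tree} to replace them by edge-disjoint trees with the \emph{same} restricted degree sequences, you only modify edges inside $U_a\cup U_b$; this removes all $(a,b)$-conflicts and cannot create a conflict with any other $T_j$, nor does it touch any other core $U_a\cup U_c$ or $U_b\cup U_c$ (since the degrees at each vertex of $U_a$ and $U_b$ inside the core are preserved, the attachments to vertices outside the core are unchanged). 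Iterating over all pairs $(a,b)$ with a conflict terminates with a fully edge-disjoint family.

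So the ``delicate defect/potential argument'' you anticipate is unnecessary: the repair is local and monotone. Your proposed detour through $W$ and induction on $m$ would work only with considerably more effort, and the obstacle you identify is an artifact of that detour rather than of the problem itself.
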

\begin{proof}
  Necessity is clear as $D_1+D_2+\dots+D_m$ is not graphical if $\max_{i,j}\{d_{i,j}\} > n-m$.

  The statement is trivial when $m=1$, if $m=2$ then it is equivalent to  Corollary~\ref{cor:not-star-tree}, so we may suppose $m>2$.

We give a constructive proof for the other direction. First a trial solution is built which might contain parallel edges, then these parallel edges are eliminated to get an edge disjoint realization.

Let $V_i$ denote the subset of vertices on which the degrees in $D_i$ are larger than $1$. Note that $\{V_1,V_2,\dots,V_m\}$ forms a subpartition of $V$ and $|V_i|\geq 2$ for each $i=1,\dots,m$. For a degree sequence $D_i$, construct a trial tree $\tilde{T}_i$ by using Lemma \ref{hart-of-sloppypar}, which ensures that the subtree on vertices $V_i \cup V_k$ is a non-star tree for any $k\ne i$.

 From the trial solution, which might contain several parallel edges, a final solution is built in the following way. While there exists a pair of indexes $(i,k)$ such that there is one or more parallel edges between $V_i$ and $V_k$, do the following.
Let $\tilde{T}_{i,k}$ denote the subtree of the tree $\tilde{T}_i$ on vertices $V_i \cup V_k$ and let
$\tilde{D}_{i,k}$ denote its degree sequence. By Corollary~\ref{cor:not-star-tree}, $\tilde{D}_{i,k}$ and $\tilde{D}_{k,i}$ have edge disjoint tree realizations. Replace $\tilde{T}_{i,k}$ and $\tilde{T}_{k,i}$ by such realizations. This removes all parallel edges between $V_i$ and $V_k$ because $\tilde{T}_j$ has no edge between these sets if $j\ne i,\; j\ne k$.
\end{proof}

\section{Counting and sampling realizations}

Since typically there are more than one realizations when a realization exists, and typically the number of realizations might grow exponentially, is is also a computational challenge to estimate their number and/or sample almost uniformly a solution. Here we have the following theorem.

\begin{theorem}
\begin{sloppypar}
Let $D = d_1, d_2, \dots, d_n$ and $F = f_1, f_2, \dots, f_n$ be two tree degree sequences such that each vertex is a leaf in at least one of the trees. Furthermore, assume that none of the trees is a star. Then there is an FPRAS for estimating the number of disjoint realizations and there is an FPAUS for almost uniformly sampling realizations.
\end{sloppypar}
\end{theorem}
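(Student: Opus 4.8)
The plan is to reduce both tasks to a single rejection scheme and then to prove that its acceptance probability is only polynomially small. Since the number $|\mathcal{T}_D|$ of tree realizations of $D$ is given in closed form by \eqref{eq:number-of-trees}, and since a uniformly random such tree can be produced in linear time by drawing a uniform Pr\"ufer string in which label $i$ occurs $d_i-1$ times, one can sample $T_1$ and $T_2$ independently and uniformly and accept the pair iff it is edge-disjoint. The output is then \emph{exactly} uniform on the set of edge-disjoint pairs, so truncating after $O(p^{-1}\log\epsilon^{-1})$ trials yields an $\FPAUS$; and writing the number of solutions as $p\cdot|\mathcal{T}_D|\cdot|\mathcal{T}_F|$, with the last two factors known exactly, reduces the $\FPRAS$ to estimating the Bernoulli parameter $p$ by Monte Carlo to relative error $\epsilon$. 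Both reductions are polynomial precisely when $p\ge 1/\poly(n)$, so everything hinges on a polynomial lower bound for $p$.

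Here $p=P(X=0)$, where $X$ counts the common edges of $T_1,T_2$, and by Theorem~\ref{theo:random} we have $E[X]=1$. First I would record the structural facts. Writing $\mathcal{A}=\{i:d_i>1\}$, $\mathcal{B}=\{i:f_i>1\}$, $\mathcal{C}=V\setminus(\mathcal{A}\cup\mathcal{B})$ (a genuine partition since $\min(d_i,f_i)=1$), every common edge joins $\mathcal{A}$ to $\mathcal{B}$, and no two common edges can share a vertex: a shared $\mathcal{A}$-endpoint is a leaf of $T_2$ and a shared $\mathcal{B}$-endpoint is a leaf of $T_1$. Thus the common edges that actually occur always form a matching. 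Iterating the contraction argument used in Theorem~\ref{theo:random} over a matching $S=\{(i_1,j_1),\dots,(i_k,j_k)\}$ gives $P(S\subseteq T_1)=\prod_t(d_{i_t}-1)/(n-2)_{\underline{k}}$, and likewise for $T_2$, whence
\begin{equation*}
E\big[X(X-1)\cdots(X-k+1)\big]=\frac{(k!)^2\,e_k(\{d_i-1\})\,e_k(\{f_j-1\})}{\big[(n-2)_{\underline{k}}\big]^2}\le 1,
\end{equation*}
the bound holding because $k!\,e_k(\{d_i-1\})$ counts ordered $k$-tuples of tokens from distinct groups among $n-2$ tokens grouped by the $d_i-1$, which is at most the number $(n-2)_{\underline{k}}$ of ordered tuples of distinct tokens. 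So $X$ dominates $\mathrm{Poisson}(1)$ from above in every factorial moment.

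These bounds give $\mathrm{Var}(X)\le 1$ but, as the broom $D=(n-2,2,1,\dots,1)$ shows, \emph{cannot} yield a constant lower bound: there $X$ concentrates at its mean and $p=\Theta(1/n)$. Hence the honest target is the weaker, still sufficient, bound $p\ge 1/\poly(n)$. The route I would take writes $X=\sum_{i\in\mathcal{A}}Y_i$, where $Y_i$ indicates that the $T_2$-parent of the leaf $i$ coincides with one of the $T_1$-children of $i$ in $\mathcal{B}$; each $i$ contributes at most once because $i$ is a leaf of $T_2$, and $E[Y_i]=(d_i-1)/(n-2)$, strictly below $1$ exactly because no tree is a star. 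The goal is the product bound $p=P\big(\bigcap_i\{Y_i=0\}\big)\ge\prod_{i\in\mathcal{A}}(1-E[Y_i])$, which, using $\sum_i E[Y_i]=1$, at most one summand exceeding $1/2$, and $1-E[Y_i]\ge 1/(n-2)$, is at least $e^{-2}/(n-2)$.

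The main obstacle is justifying that product bound, i.e.\ proving the required negative dependence among the placement events $\{Y_i=1\}$. These events are governed by where the leaves of the two independent random trees attach, and I expect them to be negatively associated (leaves competing for the same parent slots), which would give $P(\bigcap\{Y_i=0\})\ge\prod P(Y_i=0)$ by the standard consequence of negative association. The subtlety is that a uniform tree with a prescribed degree sequence is neither globally positively nor globally negatively associated on its edge indicators — disjoint pendant edges are positively correlated, whereas pendant edges sharing their internal endpoint are negatively correlated — so the inequality must be obtained for the aggregated variables $Y_i$, presumably by conditioning on $T_1$ and invoking negative association of the leaf-to-parent assignment in the random tree $T_2$. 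Finally, the finitely many small or degenerate instances (bounded $n$) are disposed of directly, edge-disjoint realizations existing there by Corollary~\ref{cor:not-star-tree}.
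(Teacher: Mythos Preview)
Your overall framework---rejection sampling from independent uniform trees, reducing both the $\FPRAS$ and the $\FPAUS$ to a polynomial lower bound on the acceptance probability $p=P(X=0)$---is exactly the paper's, and your handling of the Monte Carlo and truncation details is fine. The gap is precisely where you locate it yourself: the product inequality $P\big(\bigcap_i\{Y_i=0\}\big)\ge\prod_i(1-E[Y_i])$ is never established. You appeal to an expected negative association of the $Y_i$, but you correctly note that edge indicators of a uniform tree with prescribed degrees are not globally negatively associated, and the conditioning sketch (``presumably by conditioning on $T_1$'') does not close the gap. As it stands the lower bound on $p$ is asserted, not proved.

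The paper avoids this entirely with a first-moment trick you already have the ingredients for. From $E[X]=1$ one gets the exact identity $N_0=\sum_{k\ge 2}(k-1)N_k$, where $N_k$ is the number of ordered pairs $(T_1,T_2)$ with exactly $k$ common edges. Hence $N_0$ is at least the number of pairs having at least two common edges. Since neither sequence is a star, pick $i_1\ne i_2\in\mathcal{A}$ and $j_1\ne j_2\in\mathcal{B}$; the number of pairs in which both $(v_{i_1},v_{j_1})$ and $(v_{i_2},v_{j_2})$ occur as common edges is computed explicitly (via the same contraction / Pr\"ufer count you used), and every such pair has $\ge 2$ common edges. Dividing by the total number of pairs yields
\[
p\;\ge\;\frac{(d_{i_1}-1)(d_{i_2}-1)(f_{j_1}-1)(f_{j_2}-1)}{(n-2)^2(n-3)^2}\;\ge\;\frac{1}{(n-2)^2(n-3)^2},
\]
a clean $1/\poly(n)$ bound with no correlation analysis required. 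Your factorial-moment computation and your matching observation are correct but ultimately unused; the single equality $E[X]=1$, together with one explicitly counted family of pairs with two forced common edges, suffices.
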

\begin{proof}
This theorem is based on Theorem~\ref{theo:random}. As we discussed, there are random trees with at least two parallel edges. The number of pair of trees containing parallel edges $(v_{i_1},v_{j_1})$ and $(v_{i_2},v_{j_2})$ such that $d_{i_1}, d_{i_2} >1$ and $f_{j_1}, f_{j_2} > 1$ is
\begin{eqnarray}
\frac{(n-4)!}{(d_{i_1}-2)!(d_{i_2}-2)! \prod_{k \ne i_1,i_2} (d_k-1)!}&\times&\nonumber\\
\frac{(n-4)!}{(d_{j_1}-2)!(d_{j_2}-2)! \prod_{k \ne j_1,j_2} (f_k-1)!}&&.
\end{eqnarray}
Therefore, at least the same number of pair of trees have no parallel edges (that is, are edge disjoint realizations of the degree sequences) to get the expectation $1$ for the number of parallel edges. Therefore, the probability that two random trees will be edge disjoint is at least
\begin{equation*}
\frac{(d_{i_1}-1)(d_{i_2}-1)(f_{j_1}-1)(f_{j_2}-1)}{(n-2)^2(n-3)^2}.
\end{equation*}
 It follows from basic statistical considerations that an FPRAS algorithm can be designed based on this property. Indeed, let $\xi$ be the indicator variable that a random pair of trees are edge disjoint realizations. Then the number of edge disjoint realizations is
\begin{equation*}
E[\xi] \frac{(n-2)!}{\prod_{k=1}^n(d_i - 1)!}\frac{(n-2)!}{\prod_{k=1}^n(f_i - 1)!}.
\end{equation*}
Furthermore, we know that 
\begin{equation*}
E[\xi] \ge \frac{(d_{i_1}-1)(d_{i_2}-1)(f_{j_1}-1)(f_{j_2}-1)}{(n-2)^2(n-3)^2}.
\end{equation*}
Uniformly distributed random trees with a prescribed degree sequence can be generated in polynomial time based on the fact that the probability that a given leaf is connected to a vertex with degree $d_i$ is
$$
\frac{d_i-1}{n-2}.
$$
A uniformly distributed tree can be generated by randomly selecting a neighbor of a given leaf, then generating a random tree for the remaining degree sequence. Equivalently, the trees with a prescribed degree sequence can be encoded by the Pr\"uffer codes in which the index $i$ appears exactly $d_i-1$ times. Uniformly generating such Pr\"uffer codes is an elementary computational task.

Therefore, random pair of trees can be generated in polynomial time, and it is easy to check whether or not they are edge disjoint realizations. Such sampling of random trees provide an unbiased estimation for the expectation of the indicator variable $\xi$. Indeed, if $X_i$ is $1$ if the $i^\mathrm{th}$ pair of random trees are edge disjoint and $0$ otherwise, then the random variable
$$
Y_m := \sum_{i=1}^m X_i
$$
follows a binomial distribution with parameter $p = E[\xi]$ and expectation $m E[\xi]$. The tails of the binomial distributions can be bounded by the Chernoff's inequality:
\begin{equation*}
P(Y_m \le mp(1-\epsilon)) \le exp\left(-\frac{1}{2p} \frac{(mp - mp(1-\epsilon))^2}{m}\right).
\end{equation*}
This should be bounded by $\frac{\delta}{2}$ (the other half $\delta$ error will go to the other tail)
\begin{equation}
exp\left(-\frac{1}{2p} \frac{(mp - mp(1-\epsilon))^2}{m}\right) \le \frac{\delta}{2}.\label{eq:binomial-lower-tail}
\end{equation}
Solving Equation~\ref{eq:binomial-lower-tail}, we get
\begin{equation*}
m \ge \frac{-2\log\left(\frac{\delta}{2}\right)}{p\epsilon^2}.
\end{equation*}
For the upper tail, we can also use the Chernoff's inequality, just replacing $p$ with $1-p$ and the upper threshold $mp(1+\epsilon)$ with $m-mp(1+\epsilon)$:
\begin{eqnarray*}
&P(Y_m \ge mp(1+\epsilon)) \le&\nonumber \\
 &exp\left(-\frac{(m(1-p) -(m- mp(1+\epsilon)))^2}{2(1-p)m}\right).&
\end{eqnarray*}
Upper bounding this with $\frac{\delta}{2}$ and solving the inequality, we get that
\begin{equation*}
m \ge \frac{-2(1-p)\log\left(\frac{\delta}{2}\right)}{p^2\epsilon^2}.
\end{equation*}
Since $\frac{1}{p} = O(n^4)$, the necessary number of samples is indeed polynomial with the size of the problem, $\frac{1}{e}$ and $-\log(\delta)$. Furthermore, one sample can be generated in polynomial time, therefore this algorithm is indeed an FPRAS.

 It is also well known that an FPAUS algorithm can be designed in this case. The FPAUS algorithm generate $\frac{-\log(\epsilon)}{p}$ pair of random trees. If any of them is an edge disjoint realization, then the algorithm returns with it. Otherwise it generates an arbitrary realization and returns with it.

This is indeed an FPAUS algorithm, since any random pair of trees which are edge disjoint come from sharp the uniform distribution of the solutions. The probability that there will be no edge disjoint pair of trees in $m$ number of samples is
$$
(1-p)^m.
$$
This probability is not larger than $\epsilon$. Indeed,
$$
(1-p)^{\frac{-\log(\epsilon)}{p}} \le \epsilon,
$$
since
$$
\frac{-\log(\epsilon)}{p} \log(1-p) \le \log(\epsilon)
$$
because
$$
-\log(1-p) \ge p.
$$
Namely, the algorithm generates realizations from a distribution which is the convex combination $(1-\epsilon')U+\epsilon'\pi$, where $\epsilon' \le \epsilon$, $U$ is the uniform distribution and $\pi$ is an arbitrary distribution. However, the variational distance of this distribution from the uniform one is
\begin{eqnarray*}
&d_{TV}(U,(1-\epsilon')U+\epsilon'\pi) =&\nonumber\\
&\frac{1}{2}\sum_x |U(x) -((1-\epsilon')U(x)+\epsilon'\pi(x)| =&\nonumber\\
&\epsilon' \frac{1}{2}\sum_x |U(x) -\pi(x)| \le \epsilon' \le \epsilon.&
\end{eqnarray*}
Since one sample can be generated in polynomial time, and the total number of samples is polynomial with the size of the problem and $-\log(\epsilon)$, this algorithm is indeed and FPAUS.
\end{proof}

It remains an open question whether or not similar theorems exist for the case when the tree degree sequences have common high degrees. Also it is open if exact counting of the edge disjoint solutions is possible in polynomial time, although the natural conjecture is that this counting problem is $\sharpP$-complete.

\section{An NP-completeness theorem}

What can we say when only one of the two degree sequences is a tree degree sequence and the other is arbitrary? Unfortunately, we have a negative result here.

\begin{theorem}
It is $\NP$-complete to decide if there is an edge disjoint realization of a tree degree sequence and an arbitrary degree sequence. (It is not required that the tree degree sequence have a tree realization).
\end{theorem}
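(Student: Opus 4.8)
The plan is to prove $\NP$-completeness by reduction. Membership in $\NP$ is immediate: a witness is the pair of edge-disjoint realizations, and checking that two given edge sets are disjoint and match the prescribed degree sequences is clearly polynomial. The substance of the theorem is hardness, so the bulk of the argument will be a polynomial-time many-one reduction from a known $\NP$-complete problem to our decision problem. Since the paper's introduction recalls that the color degree matrix problem is already $\NP$-complete for two colors when the degree sums on some vertices fall below $n-1$, the natural strategy is to reduce from an instance of packing two \emph{arbitrary} degree sequences $D_1, D_2$ and transform it into an instance where one of the two sequences becomes a \emph{tree} degree sequence.

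\textbf{The main idea} I would pursue is a gadget construction: given an arbitrary pair $(D_1, D_2)$ on $n$ vertices, augment the vertex set with auxiliary ``pendant'' vertices and adjust degrees so that $D_2$ (the sequence we want to turn into a tree) acquires total degree exactly $2N - 2$ on the new vertex count $N$ with every entry positive, i.e.\ becomes a tree degree sequence, while the realizability of the packing is preserved. Concretely, I would attach a controlled number of degree-$1$ vertices and a high-degree ``hub'' vertex that is forced (by the edge-disjointness constraint against $D_1$, which would assign degree $0$ to the hub) to absorb exactly the slack needed to make $D_2$ summing to $2N-2$. The key steps, in order, are: (i) start from a suitable $\NP$-complete source problem (I would try reducing from the graphical / bipartite-degree packing problem, or directly encode a problem like the balanced-edge-coloring or a $3$-partition-style instance); (ii) define the padded sequences $D_1', D_2'$ and the extra vertices; (iii) verify $D_2'$ is a genuine tree degree sequence (positive entries, sum $2N-2$); (iv) prove the forward direction, that a solution to the source instance yields edge-disjoint realizations of $(D_1', D_2')$; and (v) prove the backward direction, that any edge-disjoint realization forces the gadget edges into a predetermined pattern, leaving a solution to the source instance.

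\textbf{The hard part will be} the backward direction together with designing the gadget so that $D_2$'s realization is genuinely unconstrained — recall the theorem explicitly permits the tree degree sequence to be realized by a \emph{non-tree}. This freedom is what makes the problem hard rather than falling under Kundu's polynomial result, so the reduction must exploit it: I must ensure that the extra degrees injected into $D_2'$ can only be ``spent'' in one essentially forced way once $D_1'$ is also required to be packed edge-disjointly, so that the residual choice precisely mirrors the source instance's solutions. Pinning down this forcing — showing that no unexpected realization of $D_2'$ (as an arbitrary graph, possibly with cycles) can cheat the intended correspondence — is the delicate combinatorial core, and I would spend most of the effort establishing that the hub and pendant gadget leaves exactly the source problem's degrees of freedom and nothing more.
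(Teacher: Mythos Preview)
Your high-level strategy matches the paper's: reduce from the known $\NP$-complete problem of packing two (bipartite) degree sequences, and pad with gadget vertices until one of the two sequences becomes a tree degree sequence. However, the specific gadget you sketch is inverted in a way that undermines precisely the backward direction you correctly flag as the hard part.

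You place the hub in $D_2$ (the tree-to-be) and give it degree $0$ in $D_1$. But a vertex with degree $0$ in $D_1$ imposes \emph{no} edge-disjointness constraint on where its $D_2$-edges go: in an arbitrary (non-tree) realization of $D_2'$ the hub may attach to any subset of the right size, shifting the residual degrees on the original vertices and destroying the correspondence with the source instance. Degree $0$ on the other side gives freedom, not forcing.

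The paper's trick is the opposite: put the hub in the \emph{arbitrary} sequence and make its degree \emph{saturating}. One step adds two new vertices $v_{n+1},v_{n+2}$ and sets
\[
D' = d_1,\dots,d_n,1,1, \qquad F' = f_1{+}1,\dots,f_n{+}1,\; n,\; 0.
\]
In any realization of $F'$, vertex $v_{n+1}$ has degree $n$ but cannot touch $v_{n+2}$ (whose $F'$-degree is $0$), so it is forced to join all of $v_1,\dots,v_n$; edge-disjointness then forces the single $D'$-edge at $v_{n+1}$ to go to $v_{n+2}$. The gadget is rigid in both directions, and deleting the two new vertices recovers the original instance exactly. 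Each such step adds two vertices to $D$ while raising $\sum d_i$ by only $2$, so the excess $\sum d_i-(2n-2)$ drops by $2$; iterating polynomially many times (after preliminary reductions that pass from bipartite to simple sequences and then eliminate zero degrees in $D$) drives $D$ down to a genuine tree degree sequence.

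So your outline is right, but the one concrete idea you are missing is that the forcing must come from a \emph{saturated} hub placed in the other sequence, not from a degree-$0$ slot in it; and the construction is iterative rather than one-shot.
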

\begin{proof}
We use the theorem by \cite{dgm2009} that it is $\NP$-complete to decide if two bipartite degree sequences has an edge disjoint realizations. We have the following observations.
\begin{itemize}
\item A bipartite degree sequence pair
$$D =(d_{1,1}, d_{1,2}, \dots, d_{1,n_1}), (d_{2,1},d_{2,2},\dots, d_{2,n_2})$$ and
$$F= (f_{1,1}, f_{1,2}, \dots, f_{1,n_1}), (f_{2,1},f_{2,2},\dots, f_{2,n_2})$$ has an edge 
disjoint realization if and only if the simple degree sequence pair
$$D'=(d_{1,1}+n_1-1, \dots, d_{1,n_1}+n_1-1, d_{2,1},\dots, d_{2,n_2})$$ and
$$F'=(f_{1,1}, \dots, f_{1,n_1}, f_{2,1}+n_2-1,\dots, f_{2,n_2}+n_2-1)$$ has an edge disjoint realization. Indeed, if an edge disjoint bipartite realization of $D$ and $F$ is given, then the complete graph on the first vertex class can be added to the first realization and the complete graph on the second vertex class can be added to the second realization to get a (now non-bipartite) realization of $D'$ and $F'$. On the other hand, it is easy to see that any realization of $D'$ contains $K_{n_1}$ on the first $n_1$ vertices, and any realization of $F'$ contains $K_{n_2}$ on the last $n_2$ vertices. Given an edge disjoint realization of $D'$ and $F'$, deleting $K_{n_1}$ from $D'$ and $K_{n_2}$ from $F'$ yields an edge disjoint realization of $D$ and $F$.

\item The degree sequence pair $D = d_1, d_2, \dots, d_n$ and $F = f_1, f_2, \dots, f_n$ has an edge disjoint realization if and only if the degree sequence pair $D' = d_1+1, d_2+1, \dots, d_n+1, n$ and $F' = f_1, f_2, \dots, f_n, 0$ has an edge disjoint realization. Indeed, let $G_1$ and $G_2$ be an edge disjoint realization of $D$ and $F$. Then add a vertex $v_{n+1}$ to $G_1$, and connect it to all the other vertices to get a realization of $D'$. Add an isolated vertex $v_{n+1}$ to $G_2$ to get a realization of $F'$. These realizations of $D'$ and $F'$ are edge disjoint. On the other hand, in any realization of $D'$, $v_{n+1}$ is connected to all the other vertices. If edge disjoint realizations of $D'$ and $F'$ are given, delete $v_{n+1}$ from both realizations to get edge disjoint realizations of $D$ and $F$.

\item The degree sequence pair $D = d_1, d_2, \dots, d_n$ and $F = f_1, f_2, \dots, f_n$ has an edge disjoint realization if and only if the degree sequence pair $D' = d_1, d_2, \dots, d_n, 1, 1$ and $F' = f_1+1, f_2+1, \dots, f_n+1, n, 0$ has an edge disjoint realization. Indeed, any edge disjoint realization $G_1$ and $G_2$ of $D$ and $F$ can be extended to an edge disjoint realization of $D'$ and $F'$ by adding two vertices $v_{n+1}$ and $v_{n+2}$, and then connecting $v_{n+1}$ to all $v_1, \dots, v_n$ in $G_2$ and connecting $v_{n+1}$ and $v_{n+2}$ in $G_1$. On the other hand, in any edge disjoint realizations $G_1'$ and $G_2'$ of $D'$ and $F'$, $v_{n+1}$ is connected to all $v_1, \dots, v_n$ in $G_2'$, therefore, $v_{n+1}$ must be connected to $v_{n+2}$ in $G_1'$. Therefore deleting $v_{n+1}$ and $v_{n+2}$ yields an edge disjoint realization of $D$ and $F$.

\end{itemize}

We can use the first observation to prove that it is also $\NP$-complete to decide that two simple degree sequences have edge disjoint realizations. The second observation provides that it is $\NP$-complete to decide if two degree sequences  have edge disjoint realizations such that one of the degree sequences does not have 0 degrees. Finally, we can use the third observation to iteratively transform any $D$ degree sequence (that already does not have a 0 degree) to a tree degree sequence. Indeed, in each step, we add two vertices to $D$ and extend the sum of the degrees only by $2$. Therefore in a polynomial number of steps, we get a degree sequence $D'$ in which the sum of the degrees is exactly twice the number of vertices minus 2. Therefore it follows that given any bipartite degree sequences $D$ and $F$, we can construct in polynomial time two simple degree sequences $D'$ and $F'$ such that $D$ and $F$ have edge disjoint realizations if and only if $D'$ and $F'$ have edge disjoint realizations, furthermore, $D'$ is a tree degree sequence.
\end{proof}

\section{Discussion and Conclusions}\label{sec:conclusions}

In this paper, we considered packing tree degree sequences. When there are no common leaves, there are always edge disjoint caterpillar realizations. On the other hand, there might not be edge disjoint caterpillar realizations when there are common leaves, even if otherwise there are edge disjoint tree realizations.

When there are no common high degree vertices, there are edge disjoint tree realizations if and only if none of the degree sequences is a degree sequence of a star. Similar theorem exists for arbitrary number of trees, and it is easy to decide if arbitrary number of tree degree sequences without common high degrees have edge disjoint realizations.

It is also known \cite{kundu} that a degree sequence and an almost regular degree sequence have an edge disjoint realization if and only if their sum is graphical. This raises the natural question if a degree sequence and a tree sequence have edge disjoint realizations if and only if their sum is graphical. We showed that the answer is no to this question, and actually, it is $\NP$-complete to decide if an arbitrary degree sequence and a tree degree sequence have edge disjoint realizations.

We also considered to approximately count and sample edge disjoint tree realizations with prescribed degrees. We showed that it is possible if there are no common high degree vertices. It remains an open question when the two degree sequences have common high degree vertices.

\end{document}